\theoremstyle{plain}
\newtheorem{thm}{Theorem}[section]
\newtheorem*{thm*}{Theorem}
\newtheorem{lem}[thm]{Lemma}
\newtheorem{cor}[thm]{Corollary}
\theoremstyle{remark}
\newtheorem{rem}[thm]{Remark}
\theoremstyle{definition}
\newcommand{\ZZ}{\mathbb{Z}}
\newcommand{\QQ}{\mathbb{Q}}
\newcommand{\lcm}{\mathrm{lcm}}
\title[Infinitely many odd zeta values are irrational. By elementary means]{Infinitely many odd zeta values are irrational. By elementary means}
\author{Johannes Sprang}
\thanks{The author has been supported by the SFB 1085 ``Higher Invariants'' funded by the DFG}
\address{Fakult\"at f\"ur Mathematik Universit\"at Regensburg  \\ 93040 Regensburg }
\email{johannes.sprang@mathematik.uni-regensburg.de }
\date{}
\begin{document}
\begin{abstract}
In this small note, we provide an elementary proof of the fact that infinitely many odd zeta values are irrational. For the first time, this celebrated theorem been proven by Rivoal and Ball--Rivoal. The original proof uses highly non-elementary methods like the saddle-point method and Nesterenko's linear independence criterion. Recently, Zudilin has re-proven a slightly weaker form of his important result that at least one of the odd zeta values $\zeta(5),\zeta(7),\zeta(9)$ and $\zeta(11)$ is irrational, by elementary means. His new main ingredient are certain 'twists by half' of hypergeometric series. Generalizing this to 'higher twists' allows us to give a purely elementary proof of the result of Rivoal and Ball--Rivoal.
\end{abstract}

\maketitle
The question about the values of the \emph{Riemann zeta function} 
\[
	\zeta(s)=\sum_{n=1}^\infty \frac{1}{n^s}
\]
at the positive integers is very old and has its origins in the 17th century. The first breakthrough has been made by Euler who showed the remarkable formulas
\[
	\zeta(2)=\frac{\pi^2}{6},\quad \zeta(4)=\frac{\pi^4}{90},\quad \zeta(6)=\frac{\pi^6}{945}, ...
\]
More generally, he has proven
\[
	\zeta(2n)=\frac{(-1)^{n+1}B_{2n}(2\pi)^{2n}}{2(2n)!},
\]
where $B_n$ denotes the $n$-th Bernoulli number. Combined with Lindemann's proof of the transcendence of $\pi$, this proves the transcendence of all even zeta values. While we have a good understanding of the structure of the even zeta values, almost nothing is known for the odd zeta values $\zeta(2n+1)$. The first non-trivial result on the structure of $\zeta(2n+1)$ has been obtained by Ap\'ery in 1979. He was able to proof that $\zeta(3)$ is irrational. Although this is the only odd zeta value for which we know the irrationality for sure, the best known result aiming towards the irrationality of $\zeta(5)$ is due to Zudilin. He was able to prove the amazing result that at least one of the numbers
\[
	\zeta(5),\zeta(7),\zeta(9),\zeta(11)
\]
is irrational. Further, we know by a celebrated theorem of Rivoal and Ball--Rivoal, that infinitely many among the odd zeta values
\[
	\zeta(3),\zeta(5),\zeta(9),...
\]
are irrational, \cite{rivoal},\cite{ball_rivoal}. While the original proofs of the results of Zudilin, Rivoal and Rivoal--Ball are highly non-elementary, Zudilin has recently presented a very elementary proof of the fact that at least one of the zeta values
\[
	\zeta(5),\zeta(7),...,\zeta(25)
\]
is irrational. He explicitly constructs a sequence of non-zero linear forms
\[
	\hat{r}_n=a_{0,n}+a_{5,n}\zeta(5)+a_{7,n}\zeta(7)+...+a_{25,n}\zeta(25),\quad a_{ij}\in\ZZ
\]
tending to zero as $n$ goes to infinity. If all involved zeta values were irrational, we could multiply by their common denominator and we would obtain a non-zero sequence of integers converging to zero - an obvious contradiction. The new key ingredient for the construction of the involved linear forms is that he introduces certain 'twists by half' of well-poised hypergeometric series. Comparing both, the twisted and the untwisted, hypergeometric series allows him to eliminate the unwanted value $\zeta(3)$. The present work builds heavily on Zudilin's beautiful and elementary proof. Without his paper \cite{zudilin}, this paper would probably not exist.\par 
In this work, we introduce 'higher twists' by integers $D>1$ of certain hypergeometric series. This allows us to construct certain linear-forms 
\[
	\sum_{\substack{i=3 \\ i \text{ odd}}}^s a^{(D)}_i\zeta\left(i,\frac{j}{D}\right)+a^{(D,j)}_0.
\]
for special values of the Hurwitz zeta function
\[
	\zeta\left(s,\alpha\right):=\sum_{n=0}^\infty\frac{1}{(n+\alpha)^s}.
\]
By using the formula
\[
	(D^i-1)\cdot\zeta(i)=\sum_{j=1}^{D-1}\zeta\left(i,\frac{j}{D}\right)
\]
and a thorough analysis of the convergence properties of the involved linear forms, we can eliminate arbitrary many odd zeta values. In order to guarantee convergence, the parameter $s$ which specifies the range of the zeta values has to be chosen appropriately. \par 
Let us finally remark, that Rivoal's result is stronger in the sense that it provides an asymptotic lower bound for the dimension of the $\QQ$ vector space spanned by
\[
	\{\zeta(3),\zeta(5),...,\zeta(2n+1)\}.
\]
Nevertheless, our proof uses only very elementary techniques. As in Zudilin's proof, the technical ingredients of our proof are limited to Stirling's formula and the asymptotic formula
\[
	\lim_{n\rightarrow \infty} \sqrt[n]{\lcm\{1,...,n\}}=e
\]
which can be deduced from the prime number theorem.\par

\section{Integrality results}
A rational function over the complex numbers
\[
	S(t)=\frac{P(t)}{(t-\alpha_1)^{s_1}(t-\alpha_2)^{s_2}\cdot...\cdot(t-\alpha_r)^{s_r}}\in\mathbb{C}(t)
\]
with $\deg P\leq s:=\sum_{i=0}^r s_i$ has a unique partial-fraction decomposition
\[
	S(t)=\sum_{k=1}^r\sum_{i=1}^{s_k}\frac{b_{i,k}}{(t-\alpha_k)^i}.
\]
The coefficients can be computed explicitly by the formula
\[
	b_{i,k}=\left.\frac{1}{(s_k-i)!}\frac{\partial^{(s_k-i)}}{\partial t^{(s_k-i)}}\left( S(t)(t-\alpha_k)^{s_k} \right)\right|_{t=\alpha_k}.
\]
Let us write $d_n:=\lcm\{1,...,n\}$. The following Lemma can be proven using only the above explicit formula for the coefficients of the partial-fraction decomposition and the Leibniz rule.
\begin{lem}[{\cite[Lemma 1]{zudilin}}]\label{lem1}
 Let $k_1,...,k_r$ be pairwise distinct numbers from the set $\{0,...,n\}$ and $s_1,...,s_r$ positive integers. Then, the coefficients in the expansion
 \[
 	\frac{1}{\prod_{j=1}^r(t+k_j)^{s_j}}=\sum_{j=1}^r\sum_{i=1}^{s_j}\frac{b_{i,j}}{(t+k_j)^i}
 \]
 satisfy
 \[
 	d_n^{s-i}b_{i,j}\in \ZZ, \text{ for } i=1,...,s_j \text{ and } j=1,...,r
 \]
 where $s:=s_1+...+s_r$.
\end{lem}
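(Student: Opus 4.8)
The plan is to compute $b_{i,j}$ directly from the explicit coefficient formula quoted just before the statement, specialized to $S(t)=\prod_{l=1}^r(t+k_l)^{-s_l}$. Since multiplying $S(t)$ by $(t+k_j)^{s_j}$ simply deletes the $j$-th factor, the formula gives
\[
	b_{i,j}=\frac{1}{(s_j-i)!}\left.\frac{d^{\,s_j-i}}{dt^{\,s_j-i}}\left(\prod_{l\neq j}(t+k_l)^{-s_l}\right)\right|_{t=-k_j}.
\]
First I would record the one-variable derivative rule
\[
	\frac{d^{m}}{dt^{m}}(t+k_l)^{-s_l}=(-1)^{m}\,\frac{(s_l+m-1)!}{(s_l-1)!}\,(t+k_l)^{-s_l-m},
\]
which I will feed into the general Leibniz rule for the product over $l\neq j$.

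Next I would expand the $(s_j-i)$-th derivative of the product by the multinomial Leibniz rule, summing over multi-indices $(m_l)_{l\neq j}$ with $\sum_{l\neq j}m_l=s_j-i$ and carrying a multinomial weight $(s_j-i)!/\prod_{l\neq j}m_l!$. The prefactor $1/(s_j-i)!$ cancels this weight, and the remaining factorials regroup, using the derivative rule above, into the binomial coefficients $\frac{(s_l+m_l-1)!}{m_l!\,(s_l-1)!}=\binom{s_l+m_l-1}{m_l}\in\ZZ$. Evaluating at $t=-k_j$ turns each $(t+k_l)$ into $k_l-k_j$, so I arrive at the closed form
\[
	b_{i,j}=\sum_{\substack{(m_l)_{l\neq j}\\ \sum_{l\neq j}m_l=s_j-i}}\ \prod_{l\neq j}(-1)^{m_l}\binom{s_l+m_l-1}{m_l}(k_l-k_j)^{-s_l-m_l},
\]
in which every coefficient is an integer and the only denominators are powers of the nonzero integers $k_l-k_j$.

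The crucial bookkeeping step is to count the total exponent of the denominators in each summand: it equals $\sum_{l\neq j}(s_l+m_l)=(s-s_j)+(s_j-i)=s-i$, independently of how the $s_j-i$ derivatives were distributed. Since $k_l,k_j\in\{0,\dots,n\}$ are distinct, each $k_l-k_j$ is a nonzero integer of absolute value at most $n$ and hence divides $d_n$. Multiplying a summand by $d_n^{s-i}$ therefore yields $\prod_{l\neq j}\bigl(d_n/(k_l-k_j)\bigr)^{s_l+m_l}\in\ZZ$, so $d_n^{s-i}b_{i,j}$ is an integer combination of integers and the claim follows. The step I expect to require the most care is this exponent count, namely verifying that the denominator degree is exactly $s-i$ in every term regardless of the multi-index — this is precisely the matching that makes the single power $d_n^{s-i}$ clear all denominators at once; the integrality of the binomial coefficients, though essential, is routine.
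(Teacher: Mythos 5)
Your proposal is correct and is precisely the argument the paper has in mind: the paper defers the proof to Zudilin's Lemma~1, remarking that it follows from the explicit partial-fraction coefficient formula and the Leibniz rule, and that is exactly what you carry out in full. The two essential points — the exponent count $\sum_{l\neq j}(s_l+m_l)=(s-s_j)+(s_j-i)=s-i$ in every term, and the divisibility $(k_l-k_j)\mid d_n$ since $0<|k_l-k_j|\le n$ — are both present and correctly justified, so nothing is missing.
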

\begin{proof}
We refer to \cite[Lemma 1]{zudilin} for the proof.
\end{proof}
We will also need the following
\begin{lem}\label{lem2}
	Let $D,n,k,i\in\ZZ$ with $n,D\geq 1$. Then $\frac{n!}{\gcd(D^n,n!)}$ divides $\prod_{j=k}^{n+k-1}(Dj+i)$.
\end{lem}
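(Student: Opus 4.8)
The plan is to verify the divisibility one prime at a time by comparing $p$-adic valuations, writing $v_p$ for the $p$-adic valuation. Since $v_p(\gcd(D^n,n!)) = \min\bigl(n\,v_p(D),\,v_p(n!)\bigr)$, the left-hand quantity satisfies
\[
	v_p\!\left(\frac{n!}{\gcd(D^n,n!)}\right) = \max\bigl(0,\; v_p(n!) - n\,v_p(D)\bigr).
\]
It therefore suffices to show, for every prime $p$, that $v_p\bigl(\prod_{j=k}^{n+k-1}(Dj+i)\bigr) \geq \max\bigl(0,\,v_p(n!) - n\,v_p(D)\bigr)$. If one of the $n$ factors happens to vanish the product is $0$ and there is nothing to prove, so I may assume every factor is nonzero.

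First I would dispose of the primes $p$ dividing $D$. Here $v_p(D)\geq 1$, while Legendre's formula together with the geometric series estimate gives
\[
	v_p(n!) = \sum_{t\geq 1}\left\lfloor \frac{n}{p^t}\right\rfloor \;\leq\; \sum_{t\geq 1}\frac{n}{p^t} \;=\; \frac{n}{p-1} \;\leq\; n \;\leq\; n\,v_p(D).
\]
Hence $v_p(n!) - n\,v_p(D)\leq 0$, so the right-hand side of the target inequality equals $0$ and the required divisibility is automatic at these primes.

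The substance of the argument is the case $p \nmid D$, where $v_p(D)=0$ and the target inequality becomes $v_p\bigl(\prod_{j=k}^{n+k-1}(Dj+i)\bigr) \geq v_p(n!)$. I would count multiplicities by the layer-cake identity $v_p(\text{product}) = \sum_{t\geq 1} N_t$, where $N_t := \#\{\, j : k \leq j \leq n+k-1,\ p^t \mid Dj+i\,\}$. Because $p \nmid D$, the element $D$ is invertible modulo $p^t$, so the congruence $Dj+i\equiv 0 \pmod{p^t}$ pins $j$ down to a single residue class modulo $p^t$. Among the $n$ consecutive integers $k,k+1,\dots,n+k-1$, at least $\lfloor n/p^t\rfloor$ lie in any prescribed residue class modulo $p^t$, so $N_t \geq \lfloor n/p^t\rfloor$. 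Summing over $t$ and invoking Legendre's formula once more yields $v_p(\text{product}) \geq \sum_{t\geq 1}\lfloor n/p^t\rfloor = v_p(n!)$, as required.

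The only genuinely delicate point is the invertibility step: the clean lower bound $N_t \geq \lfloor n/p^t\rfloor$ hinges on the solution set of $Dj \equiv -i \pmod{p^t}$ being exactly one residue class, which fails precisely when $p \mid D$ — and that is exactly the case already settled trivially above. Everything else is bookkeeping with Legendre's formula, so no serious obstacle remains.
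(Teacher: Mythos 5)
Your proof is correct and follows essentially the same route as the paper: both arguments compare $p$-adic valuations prime by prime, count solutions of $Dj+i\equiv 0 \pmod{p^t}$ among $n$ consecutive integers using the invertibility of $D$ modulo powers of $p$, and conclude via Legendre's formula $\nu_p(n!)=\sum_{t\geq 1}\lfloor n/p^t\rfloor$. If anything, you are slightly more explicit than the paper, which simply asserts that $\frac{n!}{\gcd(D^n,n!)}$ is the maximal divisor of $n!$ coprime to $D$ (your estimate $v_p(n!)\leq n\leq n\,v_p(D)$ for $p\mid D$ is precisely the justification) and does not mention the degenerate case of a vanishing factor.
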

\begin{proof}
First, let us observe that $\frac{n!}{\gcd(D^n,n!)}$ is the maximal divisor of $n!$ which is co-prime to $D$. Let $p\leq n$ be a prime which is co-prime to $D$. By the Chinese remainder theorem, at least $\lfloor\frac{n}{p}\rfloor$ of the numbers
\[
	\{Dk+i,D(k+1)+i,...,D(k+n-1)+i\}
\]
are divisible by $p$, at least $\lfloor\frac{n}{p^2}\rfloor$ of them are divisible by $p^2$, etc. Let us write $\nu_p(m)$ for the $p$-adic valuation, i.e.~for the exponent of the maximal $p$-power $p^{\nu_p(m)}$ dividing $m$. By the above observation, we obtain
\[
	\sum_{l\geq 1} \lfloor\frac{n}{p^l}\rfloor\leq \nu_p\left(\prod_{j=k}^{n+k-1}(Dj+i)\right).
\]
On the other hand, we have
\[
	\nu_p\left(\frac{n!}{\gcd(D^n,n!)}\right)=\nu_p(n!)=\sum_{l\geq 1} \lfloor\frac{n}{p^l}\rfloor.
\]
Since this holds for every prime dividing $\frac{n!}{\gcd(D^n,n!)}$ the lemma follows.
\end{proof}

Let $D$ be a positive integer. For a fixed integer $s\geq 3D$, let us define
\[
	R_n^{(D)}(t):=D^{6(D-1)n}\cdot (n!)^{s-(3D-1)}\frac{\prod_{j=0}^{3Dn}(t-n+\frac{j}{D})}{\prod_{j=0}^n(t+j)^{s+1}}.
\]
The rational function $R_n^{(2)}$ coincides with the function studied in \cite{zudilin}. For $t\rightarrow \infty$ we have the asymptotic behaviour
\[
	R_n^{(D)}(t)=O\left( \frac{1}{t^{(s+1)(n+1)-3Dn-1}}\right).
\]
  Let us consider for $j\in\{1,...,D\}$ the following 'twisted' hypergeometric series
\[
	r_n^{(D,j)}:=\sum_{m=1}^\infty R^{(D)}_n\left(m+\frac{j}{D}\right).
\]
Let us write $a_{i,k}^{(D)}$ for the coefficients appearing in the partial-fraction decomposition of $R^{(D)}_n(t)$, i.e.
\[
	R_n^{(D)}(t)=\sum_{i=1}^s\sum_{k=0}^n \frac{a_{i,k}^{(D)}}{(t+k)^i}.
\]
Using the above lemma, we obtain:
\begin{lem}\label{lem3}
	The coefficients $a_{i,k}^{(D)}$ in the partial-fraction decomposition of $R_n^{(D)}(t)$ satisfy
	\[
		d_n^{s-i} a_{i,k}^{(D)}\in\ZZ.
	\]
	Further, we have
	\[
		a_{i,k}^{(D)}=(-1)^{i-1}(-1)^{nD}a_{i,n-k}^{(D)}.
	\]
\end{lem}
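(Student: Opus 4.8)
The plan is to derive both statements from a single rewriting of $R_n^{(D)}$ that clears the denominators $D$ hidden in the numerator. Writing each factor as $t-n+\tfrac{j}{D}=\tfrac1D(Dt-Dn+j)$ and reindexing $l=j-Dn$ gives
\[
\prod_{j=0}^{3Dn}\Big(t-n+\tfrac{j}{D}\Big)=\frac{1}{D^{3Dn+1}}\prod_{l=-Dn}^{2Dn}(Dt+l).
\]
Splitting off the factors with $D\mid l$, namely $\prod_{m=-n}^{2n}D(t+m)=D^{3n+1}\prod_{m=-n}^{2n}(t+m)$, and cancelling the middle block $\prod_{m=0}^{n}(t+m)$ against one power of the denominator, the powers of $D$ collapse to $D^{3(D-1)n}$ and one is left with
\[
R_n^{(D)}(t)=D^{3(D-1)n}(n!)^{s-(3D-1)}\,\frac{\prod_{m=1}^{n}(t-m)\,\prod_{m=n+1}^{2n}(t+m)\,\prod_{r=1}^{D-1}\prod_{q=-n}^{2n-1}\big(D(t+q)+r\big)}{\prod_{k=0}^{n}(t+k)^{s}}.
\]
In particular the pole at each $t=-k$ has order exactly $s$, which explains why the partial-fraction sum runs only up to $i=s$.

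For the integrality I would argue exactly as in the proof of Lemma~\ref{lem1}, applied to this shape. By the explicit coefficient formula, $a_{i,k}^{(D)}$ is the coefficient of $(t+k)^{s-i}$ in the Taylor expansion at $t=-k$ of $D^{3(D-1)n}(n!)^{s-(3D-1)}\,\tilde N(t)\,\prod_{k'\neq k}(t+k')^{-s}$, where $\tilde N$ is the numerator above. The Taylor coefficients of $\prod_{k'\neq k}(t+k')^{-s}$ at the integer point $-k$ lie in $d_n^{-(\,\cdot\,)}\ZZ$ with the $d_n$-powers controlled precisely as in Lemma~\ref{lem1}; the two integer blocks satisfy $\prod_{m=1}^{n}(t-m)=n!\binom{t-1}{n}$ and $\prod_{m=n+1}^{2n}(t+m)=n!\binom{t+2n}{n}$, hence are $n!$ times integer-valued polynomials; and the genuinely new arithmetic-progression factors $\prod_{q=-n}^{2n-1}\big(D(t+q)+r\big)$ are made integral, after normalization by $D^{3(D-1)n}$ and by the available powers of $n!$, through Lemma~\ref{lem2}. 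Matching these divisibilities against the prefactor then yields $d_n^{s-i}a_{i,k}^{(D)}\in\ZZ$.

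The symmetry is a reflection identity for $t\mapsto -n-t$, which exchanges the pole $t=-k$ with $t=-(n-k)$. Substituting into the factored form, the two integer blocks are interchanged (contributing the trivial sign $(-1)^{2n}$), the arithmetic-progression block is sent to itself under the substitutions $q\mapsto n-1-q$ and $r\mapsto D-r$ up to $(-1)^{3(D-1)n}$, and the denominator picks up $(-1)^{s(n+1)}$. Hence
\[
R_n^{(D)}(-n-t)=(-1)^{\,3(D-1)n-s(n+1)}\,R_n^{(D)}(t),
\]
and, using that $s$ is odd, this exponent is congruent to $nD+1$ modulo $2$, so the sign equals $(-1)^{nD+1}$. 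Inserting the partial-fraction expansion into this functional equation, using $\tfrac{1}{(-n-t+k)^i}=\tfrac{(-1)^i}{(t+n-k)^i}$, reindexing $k\mapsto n-k$, and comparing the coefficient of $(t+k)^{-i}$ on both sides gives $(-1)^i a_{i,n-k}^{(D)}=(-1)^{nD+1}a_{i,k}^{(D)}$, which is precisely $a_{i,k}^{(D)}=(-1)^{i-1}(-1)^{nD}a_{i,n-k}^{(D)}$.

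The main obstacle is the integrality of the arithmetic-progression factors. Unlike the classical ($D=1$) case, the numerator is no longer a product over consecutive integers, so Lemma~\ref{lem1} does not apply by itself; one must check that the normalizing powers $D^{3(D-1)n}$ and $(n!)^{s-(3D-1)}$ exactly absorb the denominators introduced by the $\tfrac{j}{D}$-shifts while keeping the surviving $d_n$-power at $s-i$ and no larger, and this matching is exactly the role of Lemma~\ref{lem2}. The symmetry, by contrast, is routine sign bookkeeping once the reflection $t\mapsto -n-t$ has been set up.
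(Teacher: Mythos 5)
Your factorization of $R_n^{(D)}$ is correct (it is the paper's decomposition, organized slightly differently), your pole-order remark is right, and your symmetry argument is complete and correct: the reflection $t\mapsto -n-t$ with total sign $(-1)^{3(D-1)n-s(n+1)}=(-1)^{nD+1}$ for odd $s$, followed by comparison of partial-fraction coefficients, is exactly the paper's argument with the signs written out. The integrality half, however, has a genuine gap, located at ``Matching these divisibilities against the prefactor then yields $d_n^{s-i}a_{i,k}^{(D)}\in\ZZ$.'' Write $G_k(t)=\prod_{k'\neq k}(t+k')^{-s}$ and let $T_m[\,\cdot\,]$ denote the $m$-th Taylor coefficient at $t=-k$. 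Your approach expands
\[
a_{i,k}^{(D)}=D^{3(D-1)n}(n!)^{s-(3D-1)}\sum_{m_1+m_2=s-i}T_{m_1}\bigl[\tilde N\bigr]\cdot T_{m_2}\bigl[G_k\bigr].
\]
Take $k=0$ and the term with $m_2=0$, $m_1=s-i$: since $T_0[G_0]=(n!)^{-s}$ exactly, this term equals $D^{3(D-1)n}\,T_{s-i}[\tilde N]/(n!)^{3D-1}$, so its integrality after multiplication by $d_n^{s-i}$ would require $(n!)^{3D-1}$ to divide $D^{3(D-1)n}\,T_{s-i}[\tilde N]\,d_n^{s-i}$. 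Nothing you have established gives this. Your divisibility inputs (the blocks are $n!$ times integer-valued polynomials; Lemma~\ref{lem2} for the progression factors) concern the \emph{values} of $\tilde N$ at integers, i.e.\ the case $s-i=0$, and they are destroyed by differentiation: integer-valuedness is not preserved (already $\binom{t}{2}'=t-\tfrac12$), so the divided derivatives $T_{m_1}[\tilde N]$ with $m_1\geq 1$ lose the factorial divisibility. Nor can $d_n^{s-i}$ bridge the shortfall, since for large $n$ it is far smaller than $(n!)^{3D-1}$. So the terms of your expansion are not individually $d_n^{s-i}$-integral; the lemma holds only after cancellations that this bookkeeping cannot see. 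Note also that the difficulty is not caused by the $j/D$-shifts you single out as the main obstacle: already for $D=1$ your argument would not close.

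The missing idea --- and it is the heart of the paper's proof --- is to make the argument multiplicative rather than additive. Cut each progression block $\prod_{q=-n}^{2n-1}\bigl(D(t+q)+r\bigr)$ into three blocks of $n$ consecutive terms and write $R_n^{(D)}$ as a product of exactly $s$ rational functions, each with denominator $\prod_{j=0}^{n}(t+j)$ to the \emph{first} power: $s-(3D-1)$ copies of $n!/\prod_j(t+j)$, the two factors $\prod_{j=1}^n(t-j)/\prod_j(t+j)$ and $\prod_{j=1}^n(t+n+j)/\prod_j(t+j)$, and $3(D-1)$ factors of the form $D^{2n}\prod_{j=1}^n(\cdots)/\prod_j(t+j)$ coming from the progression pieces. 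Each of these bricks has an \emph{integral} partial-fraction decomposition with simple poles: explicit binomial coefficients for the first three kinds, and Lemma~\ref{lem2} for the progression kinds --- this, applied to products of exactly $n$ terms, is where Lemma~\ref{lem2} actually enters. Multiplying out the $s$ bricks writes $R_n^{(D)}$ as a $\ZZ$-linear combination of products $1/\prod_{m=1}^{s}(t+k_m)$ with $k_m\in\{0,\dots,n\}$, and applying Lemma~\ref{lem1} to these products is what produces the factor $d_n^{s-i}$; the bricks' integer residues carry the factorial divisibility to exactly the right places, which is what your Taylor coefficients cannot do. Keep your factorization and your symmetry argument, but replace the Taylor--Leibniz step by this product-of-bricks argument.
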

\begin{proof}
	In the case $D=2$, the proof is contained in \cite{zudilin}. The proof given there generalizes to more general integers $D\geq 2$: The symmetry
	\[
		a_{i,k}^{(D)}=(-1)^{i-1}(-1)^{nD}a_{i,n-k}^{(D)}.
	\]
	is an immediate consequence of the symmetry $R_n^{(D)}(-n-t)=(-1)^{nD}R_n(-t)$. For the integrality statement, let us consider the following decomposition of $R^{(D)}_n$ into more elementary rational functions:
	\begin{align*}
		R_n^{(D)}(t):&=D^{6(D-1)n}\cdot (n!)^{s-(3D-1)}\frac{\prod_{j=0}^{3Dn}(t-n+\frac{j}{D})}{\prod_{j=0}^n(t+j)^{s+1}}=\\
		&=\frac{(n!)^{s-(3D-1)}\cdot \prod_{j=1}^n(t-j)\prod_{j=1}^n(t+n+j)\prod_{i=1}^{D-1}D^{6n}\prod_{j=1}^{3n}(t-n+j-\frac{i}{D})}{\prod_{j=0}^n(t+j)^{s}}.
	\end{align*}
	Each of the following factors has an integral partial-fraction decomposition:
	\begin{align*}
			\frac{n!}{\prod_{j=0}^n(t+j)}&=\sum_{k=0}^n\frac{(-1)^k\binom{n}{k}}{(t+k)}\\
			\frac{\prod_{j=1}^n(t-j)}{\prod_{j=0}^n(t+j)}&=\sum_{k=0}^n\frac{(-1)^{n+k}\binom{n+k}{n}\binom{n}{k}}{(t+k)}\\
			\frac{\prod_{j=1}^n(t+n+j)}{\prod_{j=0}^n(t+j)}&=\sum_{k=0}^n\frac{(-1)^k\binom{2n-k}{n}\binom{n}{k}}{(t+k)}\\
	\end{align*}

	\begin{align*}
		\frac{D^{2n}\prod_{j=1}^n(t-j+\frac{i}{D})}{\prod_{j=0}^n(t+j)}&=\sum_{k=0}^n\frac{\frac{(-1)^{n+k}\prod_{j=k+1}^{n+k}(Dj-i)}{n!/\gcd(D^n,n!)}\binom{n}{k}\frac{D^n}{\gcd(D^n,n!)}}{(t+k)}\\
		\frac{D^{2n}\prod_{j=1}^n(t+j-\frac{i}{D})}{\prod_{j=0}^n(t+j)}&=\sum_{k=0}^n\frac{\frac{(-1)^{n+k}\prod_{j=1-k}^{n-k}(Dj+i)}{n!/\gcd(D^n,n!)}\binom{n}{k}\frac{D^n}{\gcd(D^n,n!)}}{(t+k)}\\
		\frac{D^{2n}\prod_{j=1}^n(t+n+j-\frac{i}{D})}{\prod_{j=0}^n(t+j)}&=\sum_{k=0}^n\frac{\frac{(-1)^{k}\prod_{j=n-k+1}^{2n-k}(Dj-i)}{n!/\gcd(D^n,n!)}\binom{n}{k}\frac{D^n}{\gcd(D^n,n!)}}{(t+k)}
	\end{align*}
The last three partial-fraction decompositions are integral by \cref{lem2}. The statement of the lemma follows now by applying \cref{lem1} to the product of simpler rational functions with integral partial-fraction decomposition.
\end{proof}
For later reference, let us record the following:
\begin{lem}\label{lem4}
The coefficients $a^{(D)}_{i,k}$ in the partial-fraction decomposition of $R^{(D)}_n$ satisfy for every $k\in\ZZ_{\geq0}$, $j\in\{1,...,D\}$ and $l\in\{0,...,n\}$ 
\[
	d_n^s \cdot \sum_{i=1}^s a_{i,k}^{(D)}\frac{1}{(l+\frac{j}{D})^i}\in \ZZ.
\]
\end{lem}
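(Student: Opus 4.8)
The plan is to play two exact expressions for the quantity $\Sigma:=\sum_{i=1}^s a_{i,k}^{(D)}\frac{1}{(l+\frac{j}{D})^i}$ against each other, choosing for each prime $p$ the expression whose $p$-adic valuation is manifestly non-negative after multiplication by $d_n^s$. First I would record, via \cref{lem3}, that $\tilde a_{i,k}:=d_n^{s-i}a_{i,k}^{(D)}\in\ZZ$, and set $N:=Dl+j$, so that the ``direct'' expression reads
\[
	d_n^s\,\Sigma=\sum_{i=1}^s\tilde a_{i,k}\left(\frac{D d_n}{N}\right)^i .
\]
For every prime $p$ with $v_p(N)\le v_p(D)+v_p(d_n)$ this already suffices: then $v_p(Dd_n/N)\ge0$, and since the $\tilde a_{i,k}$ are integers, every summand is $p$-integral. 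In particular this disposes of all $p\nmid N$ at once, and more generally of every prime whose power in $N$ stays within the ``budget'' provided by $d_n$ and the normalising factor $D$.

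The remaining critical primes are those with $v_p(N)>v_p(D)+v_p(d_n)$; since $|k'-k|\le n$ forces $v_p(k'-k)\le v_p(d_n)$, such a $p$ already satisfies $p^{v_p(N)}>n$, so its dangerous power exceeds $n$ and cannot be absorbed by $d_n$ termwise. To handle these I would use that $\Sigma$ is exactly the principal part of $R_n^{(D)}$ at $-k$, evaluated at $t_0:=l+\frac{j}{D}-k$. The key observation is the vanishing $R_n^{(D)}(t_0)=0$: in the range $l\le k$ that enters the linear forms, $t_0=n-\frac{m}{D}$ with $m=D(n+k-l)-j\in\{0,\dots,3Dn\}$ is a root of the numerator of $R_n^{(D)}$, and $t_0$ is not a pole (for $j<D$ it is not even an integer, while the integer case $j=D$ gives $N/D=l+1\le n$, so no critical prime arises there). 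Summing the full partial-fraction decomposition at $t_0$ and using $R_n^{(D)}(t_0)=0$ turns the principal part at $-k$ into a sum of the \emph{other} principal parts, yielding the ``shifted'' expression
\[
	d_n^s\,\Sigma=-\sum_{k'\ne k}\sum_{i=1}^s\tilde a_{i,k'}\left(\frac{D d_n}{M_{k'}}\right)^i ,\qquad M_{k'}:=N+D(k'-k) .
\]

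The point of the shift is that the critical prime drops out of the denominators. For such a $p$ one has $v_p\big(D(k'-k)\big)=v_p(D)+v_p(k'-k)\le v_p(D)+v_p(d_n)<v_p(N)$, so by the ultrametric inequality $v_p(M_{k'})=v_p\big(D(k'-k)\big)\le v_p(D)+v_p(d_n)$, whence $v_p(Dd_n/M_{k'})\ge0$ and every summand is again $p$-integral. Thus for each prime one of the two expressions certifies $v_p(d_n^s\Sigma)\ge0$, and since both equal $d_n^s\Sigma$ this gives the integrality. The hard part is precisely the vanishing $R_n^{(D)}(t_0)=0$: it is what replaces the argument $l+\frac{j}{D}$ by the shifted arguments $\frac{M_{k'}}{D}$, differing from it by the integers $k'-k$ with $|k'-k|\le n$, and this bounded shift is exactly what pushes the valuation at the critical primes back into the range covered by $d_n$.
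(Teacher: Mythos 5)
Your proposal is correct and is essentially the paper's own proof: the paper likewise combines the integrality $d_n^{s-i}a_{i,k}^{(D)}\in\ZZ$ from \cref{lem3} with the vanishing of $R_n^{(D)}$ at the points $l+\frac{j}{D}-k$ and the ultrametric inequality, merely phrasing your per-prime case split as a proof by contradiction (and, like you, it really establishes the claim only in the range $l\le k$, which is all that \cref{lem5} uses). The one slip is the corner case $j=D$, $l=k=n$, where your bound $N/D=l+1\le n$ fails; but there $t_0=1$ is a root of the numerator and not a pole, so your shifted expression still handles the critical primes and the argument closes.
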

\begin{proof}
The rational function $R_n^{(D)}$ has zeros at $-2n,-2n+\frac{1}{D},-2n+\frac{2}{D},...,n-\frac{1}{D},n$. In particular,
\[
	0=R_n^{(D)}(-n+l+\frac{j}{D})=\sum_{i=1}^s\sum_{k=0}^n\frac{a_{i,k}^{(D)}}{(-n+k+l+\frac{j}{D})^i}.
\]
This can be reformulated as follows:
\[
	\sum_{i=1}^s a_{i,k}^{(D)}\frac{1}{(l+\frac{j}{D})^i}=-\sum_{i=1}^s\sum_{k=0}^{n-1}\frac{a_{i,k}^{(D)}}{(-n+k+l+\frac{j}{D})^i}.
\]
Let $p$ be a prime. Recall that the $p$-adic valuation $\nu_p$ is a homomorphism from $\QQ^\times$ to $\ZZ$ and satisfies the non-Archimedean triangle inequality
\[
	\nu_p(x+y)\geq \min(\nu_p(x),\nu_p(y)), \quad \forall x,y\in\QQ^\times
\]
with equality if $\nu_p(x)\neq \nu_p(y)$. Assume there were some prime $p$ with
\begin{equation}\label{eq_assumption1}
	\nu_p\left( \sum_{i=1}^s a_{i,k}^{(D)}\frac{1}{(l+\frac{j}{D})^i}\right)<-s\cdot\nu_p(d_n).
\end{equation}
From \cref{lem3} we deduce $\nu_p(a_{i,k}^{(D)})\geq -(s-i)\nu_p(d_n)$. Under our assumption, this implies
\[
	\nu_p\left(\frac{1}{(l+\frac{j}{D})^i}\right)<-i\nu_p(d_n)
\]
or equivalently
\[
	\nu_p\left(l+\frac{j}{D}\right)>\nu_p(d_n).
\]
By the non-Archimedean triangle inequality we obtain for $k\in\{0,...,n-1\}$
\[
	\nu_p\left(-n+k+l+\frac{j}{D}\right)=\nu_p(-n+k)\leq \nu_p(d_n).
\]
This implies
\begin{align*}
	\nu_p\left( \sum_{i=1}^s a_{i,k}^{(D)}\frac{1}{(l+\frac{j}{D})^i}\right)&=\nu_p\left(-\sum_{i=1}^s\sum_{k=0}^{n-1}\frac{a_{i,k}^{(D)}}{(-n+k+l+\frac{j}{D})^i}\right)\geq\\
	&\geq  \min_{i,k}\left\{\nu_p\left(a_{i,k}^{(D)}\frac{1}{(-n+kl+\frac{j}{D})^i}\right)\right\}\geq -\nu_p(d_n^s)
\end{align*}
a contradiction to \eqref{eq_assumption1}. Thus, we have
\[
	\nu_p\left( \sum_{i=1}^s a_{i,k}^{(D)}\frac{1}{(l+\frac{j}{D})^i}\right)\geq -s\cdot\nu_p(d_n)
\]
for every prime $p$ and deduce the desired integrality result
\[
	d_n^s \cdot \sum_{i=1}^s a_{i,k}^{(D)}\frac{1}{(l+\frac{j}{D})^i}\in \ZZ.
\]
\end{proof}
Let us write
\[
	\zeta(s,q):=\sum_{n=0}^\infty \frac{1}{(n+q)^s},\quad \Re(s)>1,q\neq 0
\]
for the Hurwitz zeta function.
\begin{lem}\label{lem5}
Let $n,D\geq1$ be integers with $nD$ even. For $j\in\{1,...,D\}$, we have	
\[
	r^{(D,j)}_n=\sum_{\substack{i=3 \\ i \text{ odd}}}^s a^{(D)}_i\zeta\left(i,\frac{j}{D}\right)+a^{(D,j)}_0
\]
with 
\[
	d_n^{s-i}a^{(D)}_i\in \ZZ \text{ for } i=3,5,...,s\quad \text{ and }\quad  d_n^s a_0^{(D,j)} \text{ for } j\in\{0,...,D-1\}.
\]
\end{lem}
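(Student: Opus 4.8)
The plan is to start from the partial-fraction decomposition $R_n^{(D)}(t)=\sum_{i=1}^s\sum_{k=0}^n a_{i,k}^{(D)}(t+k)^{-i}$, substitute $t=m+\tfrac{j}{D}$, and sum over $m\geq 1$. Since the asymptotic bound gives $R_n^{(D)}(t)=O(t^{-2})$, the series defining $r_n^{(D,j)}$ converges absolutely; but the individual contributions of the $i=1$ part are not summable, so I would work throughout with the finite partial sums $\sum_{m=1}^M$ and pass to the limit $M\to\infty$ only at the end. In each finite partial sum I reindex by $\ell:=m+k$. For $n<\ell\leq M$ the admissible $k$ run through all of $\{0,\dots,n\}$, so the coefficient of $(\ell+\tfrac{j}{D})^{-i}$ becomes the full sum $a_i^{(D)}:=\sum_{k=0}^n a_{i,k}^{(D)}$; for $\ell\leq n$ only $k\in\{0,\dots,\ell-1\}$ occur; and the $\ell>M$ boundary terms are $O(n/M)$ and vanish in the limit.

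Next I would show that $a_i^{(D)}=0$ unless $i$ is odd with $i\geq 3$. For even $i$ this follows from the symmetry $a_{i,k}^{(D)}=(-1)^{i-1}(-1)^{nD}a_{i,n-k}^{(D)}$ of \cref{lem3}: since $nD$ is even, summing over $k$ and substituting $k\mapsto n-k$ gives $a_i^{(D)}=(-1)^{i-1}a_i^{(D)}$, hence $a_i^{(D)}=0$. For $i=1$ the quantity $a_1^{(D)}=\sum_k a_{1,k}^{(D)}$ is precisely the coefficient of $t^{-1}$ in the expansion of $R_n^{(D)}(t)$ at infinity, which vanishes because $R_n^{(D)}(t)=O(t^{-2})$. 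The vanishing of $a_1^{(D)}$ is exactly what makes the bulk sum over $n<\ell$ convergent, and together with $\zeta(i,\tfrac{j}{D})=\sum_{\ell=0}^\infty(\ell+\tfrac{j}{D})^{-i}$ it lets me write the limit as the claimed linear form: the coefficient of $\zeta(i,\tfrac{j}{D})$ equals $a_i^{(D)}$, and
\[
	a_0^{(D,j)}=\sum_{\ell=1}^n\sum_{k=0}^{\ell-1}\sum_{i=1}^s a_{i,k}^{(D)}\frac{1}{(\ell+\tfrac{j}{D})^i}-\sum_{\ell=0}^n\sum_{k=0}^{n}\sum_{i=1}^s a_{i,k}^{(D)}\frac{1}{(\ell+\tfrac{j}{D})^i},
\]
where in the second triple sum I have used $a_i^{(D)}=0$ for even $i$ and for $i=1$ to replace $\sum_{i\text{ odd}}a_i^{(D)}$ by $\sum_{i=1}^s\sum_k a_{i,k}^{(D)}$.

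For integrality, the coefficients of the zeta values are handled directly by \cref{lem3}, which gives $d_n^{s-i}a_{i,k}^{(D)}\in\ZZ$; summing over $k$ yields $d_n^{s-i}a_i^{(D)}\in\ZZ$ for each odd $i$ with $3\leq i\leq s$. For the constant term I would observe that both triple sums above have inner sums of the exact shape $\sum_{i=1}^s a_{i,k}^{(D)}(\ell+\tfrac{j}{D})^{-i}$ with $\ell\in\{0,\dots,n\}$ and $j\in\{1,\dots,D\}$, to which \cref{lem4} applies and gives $d_n^s\sum_{i=1}^s a_{i,k}^{(D)}(\ell+\tfrac{j}{D})^{-i}\in\ZZ$. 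Thus $d_n^s$ times each inner sum is an integer, and since $a_0^{(D,j)}$ is a finite $\ZZ$-combination of these inner sums, $d_n^s a_0^{(D,j)}\in\ZZ$.

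The one genuinely delicate point is the rearrangement in the first step: the partial-fraction pieces with $i=1$ are individually non-summable, so the passage from $\sum_m$ to a sum over the index $\ell$ labelling the Hurwitz zeta values is only legitimate after isolating the finite partial sums and verifying that the boundary contributions vanish, which in turn rests on $a_1^{(D)}=0$. Once this convergence bookkeeping is in place, everything else reduces to the symmetry of \cref{lem3} and a termwise application of \cref{lem4}.
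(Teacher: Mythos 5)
Your proof is correct, and it lands on exactly the paper's objects --- your two triple sums collapse (first minus second) to the paper's constant term $a_0^{(D,j)}=-\sum_{i=1}^s\sum_{k=0}^n\sum_{l=0}^k a_{i,k}^{(D)}(l+\tfrac{j}{D})^{-i}$, and the zeta coefficients agree --- but the analytic core is handled by a genuinely different route. The paper introduces an auxiliary variable $z$, rewrites $r_n^{(D,j)}(z)$ in terms of the Lerch zeta function $\Phi(z,\tfrac{j}{D},i)$, and takes the Abel-type limit $z\nearrow 1$; there the vanishing of $\sum_k a_{1,k}^{(D)}$ is \emph{deduced} from the identity itself, because $\Phi(z,\tfrac{j}{D},1)$ diverges as $z\nearrow 1$ while every other term stays finite. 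You instead work with the finite partial sums $\sum_{m=1}^M$, reindex by $\ell=m+k$, and bound the boundary terms by $O(n/M)$; and you establish $a_1^{(D)}=0$ up front and independently, from the decay $R_n^{(D)}(t)=O(t^{-2})$ at infinity (equivalently $\lim_{t\to\infty}tR_n^{(D)}(t)=\sum_k a_{1,k}^{(D)}=0$), which is what makes your bulk sum converge --- so there is no circularity. Your version is arguably more elementary and self-contained: it needs no properties of the Lerch function and avoids the slightly delicate point in the paper's argument that the product $\bigl(\sum_k a_{1,k}^{(D)}z^{-k}\bigr)\Phi(z,\tfrac{j}{D},1)$ tends to $0$; the paper's $z$-trick, in exchange, hides all the rearrangement bookkeeping inside known convergence facts about power series. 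The remaining ingredients --- the symmetry of \cref{lem3} killing the even-$i$ coefficients (using $nD$ even), $d_n^{s-i}a_i^{(D)}\in\ZZ$ by summing \cref{lem3} over $k$, and $d_n^s a_0^{(D,j)}\in\ZZ$ by applying \cref{lem4} termwise to the inner sums $\sum_{i=1}^s a_{i,k}^{(D)}(\ell+\tfrac{j}{D})^{-i}$ --- coincide with the paper's treatment.
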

\begin{proof}
	We proceed as in \cite[Lemma 3]{zudilin}: We introduce an auxiliary parameter $z>0$:
	\begin{align*}
		r_n^{(D,j)}(z)&=\sum_{m=1}^\infty R_n\left(m+\frac{j}{D}\right)z^m=\sum_{m=1}^\infty \sum_{i=1}^s\sum_{k=0}^n \frac{a^{(D)}_{i,k}z^m}{(m+k+\frac{j}{D})^i}=\\
		&= \sum_{i=1}^s\sum_{k=0}^na^{(D)}_{i,k}z^{-k}\sum_{m=1}^\infty \frac{z^{m+k}}{(m+k+\frac{j}{D})^i}=\\
		&= \sum_{i=1}^s\sum_{k=0}^na^{(D)}_{i,k}z^{-k}\left(\Phi(z,\frac{j}{D},i)-\sum_{l=0}^k \frac{z^{l}}{(l+\frac{j}{D})^i}  \right)
	\end{align*}
	Here,
	\[
		\Phi(z,\alpha,s):=\sum_{n=0}^\infty \frac{z^n}{(n+\alpha)^s}
	\]
	is the Lerch zeta function. The Lerch zeta function converges for $s\geq 2$ at $z=1$ and $\Phi(1,\frac{j}{D},i)=\zeta(i,\frac{j}{D})$. For $i=1$ the limit $z\nearrow 1$ does not exist. Thus, by taking the limit $z\nearrow 1$ in the above equality, we conclude
	\[
		\sum_{k=0}^na^{(D)}_{1,k}=\lim_{z\nearrow 1} \sum_{k=0}^na^{(D)}_{1,k}z^{-k}=0.
	\]
	Let us define
	\[
		a^{(D)}_i:=\sum_{k=0}^n a_{i,k}^{(D)}.
	\]
	Since we assumed $Dn$ to be even,  we have the symmetry
	\[
		a_{i,k}^{(D)}=(-1)^{i-1}a_{i,n-k}^{(D)}
	\]
	and hence  $a^{(D)}_i=0$ for $i$ even. By \cref{lem3}  and \cref{lem4} we have 
	\begin{align*}
		a_0^{(D,j)}:=-\sum_{i=1}^s\sum_{k=0}^n\sum_{l=0}^k a^{(D)}_{i,k} \frac{z^{l}}{(l+\frac{j}{D})^i}  \in d_n^{-s}\ZZ,\quad d_n^{s-i}a^{(D)}_i&\in\ZZ
	\end{align*}
	as desired.
\end{proof}

\section{Asymptotic behaviour}
Let us analyze the asymptotic behaviour of $r_n^{(D,j)}$. We follow closely the argument in \cite{zudilin}.
Because $R_n^{(D)}(t)$ vanishes at $1+\frac{j}{D},2+\frac{j}{D},...,n-1+\frac{j}{D}$ we have for $j\in\{1,...,D\}$:
\[
	r^{(D,j)}_n=\sum_{m=1}^\infty R^{(D)}_n(m+\frac{j}{D})=\sum_{k=0}^\infty c_k^{(D,j)}
\]
with
\[
	c_k^{(D,j)}:=R^{(D)}_n\left(n+k+\frac{j}{D}\right).
\]

\begin{lem}\label{lem6}
Let $s\geq 3D+1 $ be odd and $j,\tilde{j}\in\{1,...,D\}$. We have the asymptotic behaviour
\[
	\lim_{n\rightarrow \infty} \sqrt[n]{r_n^{(D,j)}}=g_D(x_0)\quad \text{and} \quad \lim_{n\rightarrow \infty}\frac{r^{D,j}_n}{r^{D,\tilde{j}}_n}=1
\]
where
\[
g_D(x)=D^{6(D-1)}\frac{(3+x)^{3D}(1+x)^{s+1}}{(2+x)^{2(s+1)}}
\]
and $x_0$ is the unique positive zero of the polynomial
\[
	(x+3)^D(x+1)^{s+1}-x^D(x+2)^{s+1}.
\]
\end{lem}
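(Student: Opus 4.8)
The plan is to run an elementary saddle-point analysis of the positive series $r_n^{(D,j)}=\sum_{k\ge0}c_k^{(D,j)}$, in the spirit of \cite{zudilin} for $D=2$. First I would note that every summand is positive: at $t=n+k+\frac{j}{D}$ with $k\ge0$ and $\frac{j}{D}\in(0,1]$, each denominator factor $t+l$ and each numerator factor $t-n+\frac{l}{D}=k+\frac{j+l}{D}$ is $>0$, and the prefactor $D^{6(D-1)n}(n!)^{s-(3D-1)}$ is positive, so $r_n^{(D,j)}>0$ and no cancellation occurs. To locate the dominant summand I would use the telescoping identity
\[
\frac{R_n^{(D)}(t+1)}{R_n^{(D)}(t)}=\frac{\prod_{l=3Dn+1}^{3Dn+D}\left(t-n+\frac{l}{D}\right)}{\prod_{l=0}^{D-1}\left(t-n+\frac{l}{D}\right)}\cdot\frac{t^{s+1}}{(t+n+1)^{s+1}},
\]
so that substituting $t=n+k+\frac{j}{D}$ and letting $k/n\to x$ gives $c_{k+1}^{(D,j)}/c_k^{(D,j)}\to\rho(x):=\frac{(x+3)^D(1+x)^{s+1}}{x^D(2+x)^{s+1}}$. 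This ratio equals $1$ exactly at the positive roots of $(x+3)^D(1+x)^{s+1}-x^D(2+x)^{s+1}$; examining the single sign change of $(\log\rho)'$, which reduces to the quadratic $(s+1-3D)(x^2+3x)=6D$, shows that this root $x_0$ is unique, that $\rho>1$ for $x<x_0$ and $\rho<1$ for $x>x_0$, and that $\rho\to1^-$ at infinity because $s+1>3D$.

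Next I would compute the exponential growth rate by Stirling's formula. Grouping the products into factorials and replacing the resulting sums by integrals, for fixed $x$ one finds $\frac1n\log c_{\lfloor xn\rfloor}^{(D,j)}\to\phi(x)$, where, after the $\log n$ terms and the additive constants cancel,
\[
\phi(x)=6(D-1)\log D+D(x+3)\log(x+3)-Dx\log x-(s+1)(2+x)\log(2+x)+(s+1)(1+x)\log(1+x).
\]
A direct differentiation gives $\phi'(x)=\log\rho(x)$, so $\phi$ attains its unique global maximum at $x_0$; substituting the defining relation $(x_0+3)^D(1+x_0)^{s+1}=x_0^D(2+x_0)^{s+1}$ to eliminate the terms $x\log(\cdot)$ then yields $\phi(x_0)=\log g_D(x_0)$.

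Because all $c_k^{(D,j)}$ are positive and $\phi$ is maximised only at $x_0$, a Laplace-type squeeze gives the first limit: bounding $\sum_k c_k^{(D,j)}$ below by its largest term and above by a polynomial-in-$n$ multiple of it --- the $O(n)$ summands up to $k\approx nx_0$ are each at most the maximum, while the remaining tail is a convergent series comparable to the largest term since $\rho<1$ there --- yields $\frac1n\log r_n^{(D,j)}\to\phi(x_0)=\log g_D(x_0)$. For the ratio I would compare the two series term by term. Expressing $c_k^{(D,j)}$ through $\frac{(Dk+j+3Dn)!}{(Dk+j-1)!}$ and $\prod_{l=0}^n(D(n+k+l)+j)^{s+1}$ and letting $k/n\to x$, the powers of $Dn$ cancel and $c_k^{(D,j)}/c_k^{(D,\tilde j)}\to\psi(x)^{\,j-\tilde j}$ with $\psi(x)=\frac{(x+3)(1+x)^{(s+1)/D}}{x(2+x)^{(s+1)/D}}$. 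The decisive point is that $\psi(x_0)=1$: it is exactly the $D$-th root of the relation defining $x_0$.

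Since both series concentrate at $k\approx nx_0$ --- the rate $\phi$ being independent of $j$ --- and $\psi$ is continuous with $\psi(x_0)=1$, the term-ratios are uniformly $1+o(1)$ on the window $|k-nx_0|<\delta n$, whose complement contributes an exponentially smaller amount to each sum; letting $\delta\to0$ then gives $r_n^{(D,j)}/r_n^{(D,\tilde j)}\to1$. I expect the main obstacle to be making these concentration estimates genuinely elementary and uniform: one must control the error in Stirling's approximation uniformly for $k$ in a window around $nx_0$, handle the infinite tail, and, for the ratio, ensure that both sums peak at the same $x_0$ so that the term-wise identity $\psi(x_0)=1$ really governs the quotient of the full sums.
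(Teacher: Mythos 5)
Your proposal is correct and follows essentially the same route as the paper's proof: the same decomposition $r_n^{(D,j)}=\sum_{k\ge 0}c_k^{(D,j)}$, the same ratio function (your $\rho$ is the paper's $f_D$) whose logarithmic derivative has the quadratic numerator $(s+1-3D)(x^2+3x)-6D$ locating the unique peak $x_0$, Stirling's formula for the exponential growth rate, and the same term-wise comparison $c_k^{(D,j)}/c_k^{(D,\tilde j)}\to f_D(x)^{(j-\tilde j)/D}$ evaluated at $x_0$ (where it equals $1$) for the second limit. The only differences are bookkeeping: you work with the log-rate $\phi$ satisfying $\phi'=\log\rho$ and a max-term squeeze, while the paper computes $\sqrt[n]{c_{k_0(n)}^{(D,j)}}$ directly (using $f_D(x_0)^{x_0}=1$) and invokes a de Bruijn-style concentration on a $\sqrt{n}$-window around $x_0 n$.
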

\begin{proof}Again, we follow closely the proof in the case $D=2$ given in \cite[Lemma 4]{zudilin}:	The definition of $c_k^{(D,j)}$ gives
	\[
		c_k^{(D,j)}=D^{6(D-1)n}(n!)^{s-(3D-1)}\frac{\prod_{l=0}^{3Dn}(k+\frac{j+l}{D})}{\prod_{l=0}^{n}(n+k+l+\frac{j}{D})^{(s+1)}}
	\]
	which allows us to deduce the asymptotics
	\[
		\frac{c^{(D,j)}_{k+1}}{c^{(D,j)}_k}	=\left(\prod_{l=1}^D\frac{(k+3n+\frac{j+l}{D})}{(k+\frac{j+l-1}{D})}\right)\left( \frac{k+n+\frac{j}{D}}{k+2n+1+\frac{j}{D}}\right)^{s+1}\sim f_D\left(\frac{k}{n}\right),\quad \text{ as }n\rightarrow\infty,
	\]
	with $f_D(x)=\left(\frac{x+3}{x}\right)^D\left(\frac{x+1}{x+2}\right)^{s+1}$. The logarithmic derivative of $f_D(x)$ is
	\[
		\frac{f'_D(x)}{f_D(x)}=\frac{D}{x+3}-\frac{D}{x}+(s+1)\left(\frac{1}{x+1}-\frac{1}{x+2} \right)=\frac{(s+1-3D)x^2+(3s+3-9D)x-6D}{x(x+1)(x+2)(x+3)}.
	\]
	The quadratic polynomial in the numerator of the latter fraction has a unique positive zero $x_1$. Thus, $f_D$ monoton decreases from $+\infty$ to $f_D(x_1)$ on $(0,x_1)$ and then monoton increases on $(x_1,\infty)$ from $f_D(x_1)$ to $1$. In particular, there is a unique value $x_0$ with $f_D(x_0)=1$. Since $f_D(1)=4^D\cdot(2/3)^{(s+1)}<1$ we have $0<x_0<1$. Thus, the sequence $(c_k^{(D,j)})_{k\geq 1}$ attains its maximal values around $x_0n$. We follow the asymptotic analysis of series in de Bruijn's book \cite[\S 3.4]{deBruijn}. Using Stirling's formula
\[
	\Gamma(x)=\sqrt{\frac{2\pi}{x}}\left(\frac{x}{e}\right)^x\left(1+O\left( \frac{1}{x} \right)\right),\quad x\rightarrow \infty.
\]
	one can prove, that there is some $C>0$ such that
	 \[
	 	\lim_{n\rightarrow\infty} \frac{R_n^{(D)}\left( n+x_0n\pm t\cdot \sqrt{n} \right)}{R_n^{(D)}\left( n+x_0n\right)}=\exp\left( -\frac{C}{t^2} \right),\quad t\in\mathbb{R}.
	 \]
	Recall, that $c_{k}^{(D,j)}$ has been defined as the evaluation of $R_n^{(D)}$ at $n+k+j/D$. We deduce the existence of $\gamma>0$ with
	\[
		c_{k}^{(D,j)}<\frac{1}{2}\max_k\{c_{k}^{(D,j)}\},\quad \text{ for $k\in\ZZ$ with } |k-x_0n|>\gamma \sqrt{n}.
	\]
	The total contribution of the terms outside $[x_0n-\gamma\sqrt{n}, x_0n+\gamma\sqrt{n}]$ is relatively small. Thus the asymptotic behaviour of $r_n^{(D,j)}=\sum_{k=0}^\infty c_k^{(D,j)}$ is determined by the asymptotics of $c_k$ for $k$ in $\{ x_0n-\gamma\sqrt{n},x_0n+\gamma\sqrt{n} \}$. Let us define $k_0(n):= \lfloor x_0n \rfloor$. Using Stirling's formula, we compute
	\begin{align*}
		\lim_{n\rightarrow\infty } \sqrt[n]{r_n^{(D,j)}}&=\lim_{n\rightarrow\infty } \sqrt[n]{c_{k_0(n)}^{(D,j)}}=\lim_{n\rightarrow\infty } \Big[D^{3(D-2)n}\left(\frac{n}{e}\right)^{(s-3D+1)n} \times \\
		&\quad \times \left(\frac{3Dn+Dk_0(n)+j}{e}\right)^{3Dn+Dk_0(n)+j}\left(\frac{k_0(n)+n+\frac{j}{D}}{e}\right)^{(s+1)(k_0(n)+n+\frac{j}{D})} \times\\
		&\quad  \times \left(\frac{e}{k_0(n)+2n+\frac{j}{D}+1}\right)^{(s+1)(k_0(n)+2n+\frac{j}{D}+1)} \left(\frac{e}{Dk_0(n)+j-1}\right)^{Dk_0(n)+j-1}  \Big]^{1/n}=\\
		&=D^{3(D-2)}\frac{(3D+Dx_0)^{3D}(3D+Dx_0)^{Dx_0}(1+x_0)^{(s+1)(x_0+1)}}{(Dx_0)^{Dx_0}(2+x_0)^{(s+1)(2+x_0)}}=\\
		&=D^{3(D-2)}D^{3D}\cdot \frac{(3+x_0)^{3D}(1+x_0)^{(s+1)}}{(2+x_0)^{2(s+1)}}f_D(x_0)^{x_0}=\\
		&=D^{6(D-1)}\cdot \frac{(3+x_0)^{3D}(1+x_0)^{(s+1)}}{(2+x_0)^{2(s+1)}}=g(x_0)
	\end{align*}
	Let us compare for $n\rightarrow\infty$ the quotients
\begin{align*}
	\frac{c_k^{(D,j)}}{c_k^{(D,D)}}&=\left(\prod_{l=0}^{D-j-1}\frac{k+3n+\frac{j+l+1}{D}}{k+\frac{j+l}{D}}\right)\left(\frac{\Gamma(2n+k+1+\frac{j}{D})\Gamma(n+k+2)}{\Gamma(2n+k+2)\Gamma(n+k+1+\frac{j}{D})}\right)^{s+1}\sim\\
	&\sim \left(\frac{3n+k}{2k}\right)^{D-j}\left( \frac{n+k+2}{2n+k+2}\right)^{\frac{D-j}{D}(s+1)}.
\end{align*}
	This allows us to compare $r_n^{(D,j)}$ for different values of $j\in\{1,...,D\}$:
	\begin{align*}
		\lim_{n\rightarrow\infty } \frac{r_n^{(D,j)}}{r_n^{(D,D)}}&=\lim_{n\rightarrow\infty } \frac{c_{k_0(n)}^{(D,j)}}{c_{k_0(n)}^{(D,D)}}=\lim_{n\rightarrow\infty } \left( \frac{3n+k_0(n)}{k_0(n)} \right)^{D-j}\cdot \left( \frac{n+k_0(n)+2}{2n+k_0(n)+2} \right)^{\frac{D-j}{D}(s+1)}=\\
		&=f_D(x_0)^{\frac{D-j}{D}}=1.
	\end{align*}
	Since this holds for any $j\in\{1,...,D\}$ we get $\lim_{n\rightarrow \infty}\frac{r^{D,j}_n}{r^{D,\tilde{j}}_n}=1$.
\end{proof}

\section{Conclusion}
The aim of this section is to prove that there are infinitely many irrational zeta values of the form $\zeta(2n+1)$. We will use the above linear forms in the Hurwitz zeta function together with the formula
\[
	\sum_{j=1}^{D-1}\zeta\left(i,\frac{j}{D}\right)=\sum_{j=1}^{D-1}\sum_{n=0}^\infty \frac{D^i}{(Dn+j)^i}= (D^i-1)\zeta(i)
\]
to eliminate certain odd zeta values. In particular, we get for each divisor $d|D$
\[
	\left(\left(\frac{D}{d}\right)^i-1\right)\zeta(i)=\sum_{j=1}^{(\frac{D}{d})-1}\zeta\left(i,\frac{jd}{D}\right).
\]
This motivates the following definition
\[
	\hat{r}_n^{(D,d)}:=\sum_{j=1}^{(\frac{D}{d})-1} r_n^{(D,dj)}.
\]
With the above observation, we obtain for every divisor $d|D$ linear forms in odd zeta values:
\[
	\hat{r}_n^{(D,d)}=\sum_{\substack{i=3 \\ i \text{ odd}}}^s a^{(D)}_i\left(\left(\frac{D}{d}\right)^i-1\right)\zeta(i)+\hat{a}^{(D,d)}_0
\]
with $\hat{a}^{(D,d)}_0=\sum_{j=1}^{(\frac{D}{d})-1} a_0^{(D,dj)}$. Observe, that the coefficients $a_i^{(D)}$ do not depend on $d|D$. Choosing $D>1$ with many divisors and using that the matrix
\[
	\left(\left(\frac{D}{d}\right)^i-1\right)_{\substack{i=3,...,s \\ d|D, d\neq D}}
\]
has full rank, allows us to eliminate arbitrary many odd zeta values in such a linear form. In order to guarantee convergence, it is necessary to choose $s>>1$ appropriately. We obtain:
\begin{thm}
For each integer $m\geq 0$, there exists an integer $N_m>m$ such that every subset of
\[
	\{\zeta(3),\zeta(5),...,\zeta(2N_m+1)\}
\]
with $N_m-m$ elements contains at least one irrational number.
\end{thm}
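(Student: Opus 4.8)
The plan is to reformulate the statement as a lower bound on the number of irrational values and then to derive a contradiction from a nonzero integer that tends to zero. For a fixed $N_m$ the assertion is equivalent to the statement that at least $m+1$ of the values $\zeta(3),\zeta(5),\dots,\zeta(2N_m+1)$ are irrational: a subset of size $N_m-m$ fails to contain an irrational precisely when it consists of rationals, and such a subset exists if and only if at most $m$ of the $N_m$ values are irrational. I would therefore argue by contradiction, assuming that the set $I$ of odd indices $i\in\{3,\dots,s\}$ with $\zeta(i)\notin\QQ$ satisfies $|I|\le m$, where $s:=2N_m+1$.

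First I would fix the parameters. I choose $D$ even with at least $m+3$ divisors, for instance $D=2^{m+2}$; evenness ensures that $Dn$ is even for every $n$, so that \cref{lem5} applies, while the divisors of $D$ provide the degrees of freedom used below. Then I pick $s$ odd with $s\ge 3D+1$ large enough that $e^{s}g_D(x_0)<1$, and set $N_m:=(s-1)/2>m$. Showing that such an $s$ exists is the first delicate point: as $s\to\infty$ with $D$ fixed, the equation $f_D(x_0)=1$ forces $x_0\to 0$, and a short computation gives $g_D(x_0)\sim D^{6(D-1)}27^{D}/4^{\,s+1}$, so that $e^{s}g_D(x_0)=O\bigl((e/4)^{s}\bigr)\to 0$. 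Hence the convergence condition is met once $s$ is large, and it is precisely this condition that forces $N_m$ to be large.

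Next I would eliminate the irrational values. Writing $e_d:=D/d$ for the divisors $d\mid D$ with $d<D$, the linear forms $\hat r_n^{(D,d)}$ carry the coefficient $a_i^{(D)}(e_d^{\,i}-1)$ in front of $\zeta(i)$. I would select integers $\lambda_d$ satisfying $\sum_d\lambda_d(e_d^{\,i}-1)=0$ for every $i\in I$ and, crucially, $\sum_d\lambda_d(e_d-1)\neq 0$. Such $\lambda_d$ exist because the vectors $(e_d^{\,i}-1)_d$ for $i\in\{1\}\cup I$ are linearly independent: a nontrivial relation would produce a nonzero polynomial with at most $m+2$ monomials having the $\ge m+2$ distinct positive roots $e_d$, which Descartes' rule of signs forbids. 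Thus the functional attached to $i=1$ does not vanish on the kernel of the $|I|$ linear conditions, and an integral $\lambda$ can be extracted. Setting $L_n:=\sum_{d\mid D,\,d<D}\lambda_d\hat r_n^{(D,d)}$, every term $\zeta(i)$ with $i\in I$ cancels, so $L_n$ is a combination of the finitely many \emph{rational} values $\zeta(i)$ together with $\sum_d\lambda_d\hat a_0^{(D,d)}$. Clearing the denominators of these rational values by a fixed integer $Q$ and invoking the integrality statements of \cref{lem3}, \cref{lem4} and \cref{lem5}, I obtain $Q\,d_n^{\,s}L_n\in\ZZ$.

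Finally I would run the classical endgame. By the second assertion of \cref{lem6} one has $r_n^{(D,dj)}/r_n^{(D,1)}\to 1$, hence $\hat r_n^{(D,d)}/r_n^{(D,1)}\to e_d-1$ and $L_n/r_n^{(D,1)}\to\sum_d\lambda_d(e_d-1)\neq 0$; since $r_n^{(D,1)}>0$, the integer $Q\,d_n^{\,s}L_n$ is nonzero for all large $n$. On the other hand $\sqrt[n]{|L_n|}\to g_D(x_0)$ by \cref{lem6} and $\sqrt[n]{d_n}\to e$, so $\sqrt[n]{|Q\,d_n^{\,s}L_n|}\to e^{s}g_D(x_0)<1$, forcing $Q\,d_n^{\,s}L_n\to 0$. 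A nonzero integer of absolute value less than $1$ cannot exist, and this contradiction proves the theorem. The two points requiring genuine care are the nonvanishing of $L_n$, which rests on the ``$i=1$'' trick combined with the ratio asymptotics of \cref{lem6}, and the estimate $e^{s}g_D(x_0)<1$; I expect controlling $g_D(x_0)$ as $s\to\infty$ to be the main obstacle.
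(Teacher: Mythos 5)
Your proposal is correct, and it runs on exactly the same machinery as the paper's proof --- the forms $\hat r_n^{(D,d)}$, the integrality statements of \cref{lem3}, \cref{lem4} and \cref{lem5}, the asymptotics and ratio limits of \cref{lem6}, the estimate $g_D(x_0)=O(4^{-s})$, and the prime number theorem --- but your elimination endgame is genuinely different, in effect dual to the paper's. The paper takes $D=2^{m+1}$ and argues subset by subset: for each $J$ of size $N_m-m$ it picks $j\in J$, inverts the $(m+1)\times(m+1)$ matrix $M=(2^{i_\alpha\beta}-1)$ built on the indices $\{j\}\cup J^{c}$, and uses $w=\det(M)\,e_l\,M^{-1}$ to produce a nonzero integer linear form supported on $\{\zeta(i):i\in J\}$ tending to zero; rationality enters only in the final contradiction, and non-vanishing is the asserted positivity $(-1)^{l-1}\sum_k w_k(2^k-1)>0$. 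You instead negate the (correctly reformulated) conclusion, so that the irrational indices form a set $I$ with $|I|\le m$, enlarge $D$ to $2^{m+2}$ so that the $m+2$ proper divisors leave room for the $|I|$ vanishing conditions plus the $i=1$ condition, eliminate precisely the indices in $I$, and clear the denominators of the remaining \emph{rational} zeta values by a fixed integer $Q$. Each route buys something. The paper's gives explicit nonzero integer forms on every prescribed subset, with the smaller $D$ and hence better constants. Yours removes the quantification over subsets and, more substantially, your Descartes' rule of signs argument actually \emph{proves} the linear-algebra facts the paper only asserts (``the matrix $M$ is invertible'', ``a straight-forward computation shows $\dots>0$''): indeed $\sum_k w_k(2^k-1)$ is the determinant of $M$ with the row of exponent $i_l$ replaced by the row of exponent $1$, so the paper's positivity claim is your ``$i=1$'' non-vanishing trick in disguise, and both ultimately feed into the same ratio limit $r_n^{(D,j)}/r_n^{(D,\tilde j)}\to 1$ from \cref{lem6}.

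Two smaller remarks. First, your choice $D=2^{m+2}$ is slightly wasteful: the polynomial $P(x)=\sum_i c_i(x^i-1)$ automatically vanishes at $x=1$, so this point serves as a free extra root, and Descartes' rule then already rules out the $m+1$ roots $e_d$ supplied by $D=2^{m+1}$. Second, in the convergence estimate, $x_0\to0$ alone does not yield $g_D(x_0)\sim D^{6(D-1)}27^{D}/4^{\,s+1}$; you need $(s+1)x_0\to0$, i.e.\ the quantitative bound $x_0<4\cdot2^{-(s+1)/D}$, which is obtained exactly as in the paper by checking $f_D(4\cdot 2^{-(s+1)/D},s)<1<f_D(2^{-(s+1)/D},s)$ --- without it the factor $(1+x_0)^{s+1}$ need not tend to $1$. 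Both are details to fill in, not gaps in the approach.
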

\begin{proof}
Let us choose $D=2^{m+1}$. Let us write $x_{0}(s)$ for the unique positive solution of $f_{D}(x_0,s)=1$ where
\[
	f_{D}(x,s)=\left(\frac{x+3}{x}\right)^D\left( \frac{x+1}{x+2} \right)^{s+1}.
\]
We have $f_{D}(4\cdot 2^{-\frac{s+1}{D}},s)<1<f_{D}(2^{-\frac{s+1}{D}},s)$ for $s>>1$. By the discussion in the proof of \cref{lem6} we obtain
\[
	2^{-\frac{s+1}{D}}<x_{0}(s)<4\cdot 2^{-\frac{s+1}{D}}.
\]
In particular, we deduce that
\[
	g_{D}(x_0(s),s)=O\left(\frac{1}{4^s}\right),\quad s\rightarrow \infty
\]
where $g_{D}(x,s)$ is the polynomial
\[
g_{D}(x,s)=D^{6(D-1)}\frac{(3+x)^{3D}(1+x)^{s+1}}{(2+x)^{2(s+1)}}
\]
appearing in the statement of \cref{lem6}. Since $e^{-1}>\frac{1}{4}$, we can choose a large enough integer $N_m$ with $g_{D}(x_0(s),s)<e^{-s}$ for $s=2N_m+1$. The prime number theorem guaranties $\lim_{n\rightarrow \infty} \sqrt[n]{d_n}=e$. With the choice $s:=2N_m+1$, \cref{lem6} implies the convergence of
\[
	d_n^s\cdot \hat{r}_n^{(D,d)} \rightarrow 0,\text{ as }n\rightarrow\infty.
\]
Thus for the above choice of $s$, $d_n^s\cdot \hat{r}_n^{(D,d)}$ provides integral linear forms in odd zeta values converging to zero. Let
\[
	J\subset \{ 3,5,...,2N_m+1 \}
\]
be a subset with $N_m-m$ elements. Let us choose some $j\in J$ and write
\[
	\{j\}\cup (\{ 3,5,...,2N_m+1 \}\setminus J)=\{ i_1,...,i_{m+1} \}
\]
for its complement with $i_1<i_2<...<i_{m+1}$. Let $l$ be the index corresponding to $j$, i.e. $j=i_l$. By the choice $D=2^{m+1}$, every divisor $d|D$ is of the form $d=2^{m+1-\beta}$ for $\beta=0,...,m+1$. The matrix
\[
	M=\left( \left(\frac{D}{d}\right)^{i_\alpha}-1 \right)_{\alpha,d}=(2^{i_\alpha\cdot \beta}-1)_{\alpha,\beta=1,...,m+1}
\]
is invertible. Define the following vector with integral entries
\[
	w=(w_1,...,w_{m+1}):=\det(M)\cdot e_l\cdot M^{-1}\in \ZZ^{m+1}
\]
with $e_l=(0,...,0,1,...,0)$ the $l$-th unit vector. Then
\[
	d_n^s\hat{r}_n:=(-1)^{l-1}d_n^s\sum_{k=1}^{m+1}w_k \hat{r}_n^{(2^{m+1},2^{m+1-k})}
\]
is an integral linear combination of the odd zeta values in $J$ and tends to zero as $n$ goes to infinity. A straight-forward computation shows
\[
	(-1)^{l-1}\cdot \sum_{k=1}^{m+1}w_k \cdot \left(2^{k}-1\right)>0
\]
which implies the positivity of the involved linear forms:
\[
	\lim_{n\rightarrow \infty} \frac{\hat{r}_n}{r_n^{(D,D)}}=\lim_{n\rightarrow \infty} (-1)^{l-1}\sum_{k=1}^{m+1}w_k\sum_{j=1}^{2^k-1}\frac{r_n^{(2^{m+1},2^{m+1-k}j)}}{r_n^{(2^{m+1},2^{m+1})}}=(-1)^{l-1}\sum_{k=1}^{m+1}w_k\left(2^k-1\right)>0.
\]
Thus we obtain a sequence of non-zero integral linear combinations of 
\[
	\{ \zeta(j)\mid j\in J \}
\]
tending to zero. If all of the zeta values $\zeta(j)$ where rational, we could multiply by there common denominator and would get a sequence of non-zero integers converging to zero - an obvious contradiction.
\end{proof}

\begin{cor}
	For each $m\geq 1$ there is an integer $N_m>m$ such that at least one of the numbers
	\[
		\{\zeta(2m+1),\zeta(2m+3)...,\zeta(2N_m+1)\}
	\]
	is irrational. In particular, there are infinitely many odd zeta values, which are irrational.
\end{cor}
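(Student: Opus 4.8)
The plan is to read off the Corollary from the preceding Theorem by a simple counting argument, and then to bootstrap the infinitude statement from it by a one-line contradiction; no new analytic input is needed.

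First I would record the relevant cardinalities. The set $\{\zeta(3),\zeta(5),\ldots,\zeta(2N_m+1)\}$ consists of the values $\zeta(2k+1)$ for $k=1,\ldots,N_m$, so it has $N_m$ elements, while $\{\zeta(2m+1),\zeta(2m+3),\ldots,\zeta(2N_m+1)\}$ consists of $\zeta(2k+1)$ for $k=m,\ldots,N_m$ and is a subset of the former with $N_m-m+1$ elements. Since the Theorem furnishes $N_m>m$, we have $N_m-m\geq 1$. To prove the first assertion of the Corollary I would argue by contradiction: assuming that every number in $\{\zeta(2m+1),\ldots,\zeta(2N_m+1)\}$ is rational, I may select any sub-collection of exactly $N_m-m$ of these values. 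This is a subset of $\{\zeta(3),\ldots,\zeta(2N_m+1)\}$ with $N_m-m$ elements, all of which are rational, in direct conflict with the Theorem, which guarantees that any such subset contains an irrational number. Hence at least one of $\zeta(2m+1),\ldots,\zeta(2N_m+1)$ is irrational, for the same $N_m$ as in the Theorem.

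For the final sentence I would deduce the infinitude of irrational odd zeta values from the statement just proven. Suppose, for contradiction, that only finitely many of the numbers $\zeta(3),\zeta(5),\zeta(7),\ldots$ are irrational. Then there is an integer $m_0\geq 1$ with $\zeta(2k+1)\in\QQ$ for all $k\geq m_0$. Applying the Corollary with $m=m_0$ produces an irrational number among $\{\zeta(2m_0+1),\ldots,\zeta(2N_{m_0}+1)\}$; but every argument occurring here is an odd integer at least $2m_0+1$, hence of the form $2k+1$ with $k\geq m_0$, so all of these values are rational by assumption --- a contradiction. Therefore infinitely many odd zeta values are irrational.

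I do not anticipate a genuine obstacle here: the entire analytic and arithmetic substance has already been expended in constructing the vanishing integral linear forms $d_n^s\hat{r}_n\to 0$ behind the Theorem. The only points requiring care are purely combinatorial --- checking that $N_m-m\geq 1$ so that the selection of an $(N_m-m)$-element subset is both possible and legitimate, and confirming that the arguments appearing in the Corollary's set all exceed $2m-1$, so that it is indeed a subset of the set treated by the Theorem.
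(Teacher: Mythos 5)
Your proposal is correct and is exactly the argument the paper leaves implicit: the paper states this Corollary without proof because it follows from the Theorem by precisely the counting and contradiction steps you spell out (choosing an $(N_m-m)$-element subset of the assumed-rational values, then bootstrapping to infinitude). Both the cardinality check $N_m-m\geq 1$ and the containment $\{\zeta(2m+1),\ldots,\zeta(2N_m+1)\}\subset\{\zeta(3),\ldots,\zeta(2N_m+1)\}$ are verified correctly, so there is nothing to add.
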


\begin{rem}
We have chosen the rational functions $R_n^{(D)}$ in a preferably simple way. For improving convergence properties, there are probably much better choices.
\end{rem}

\bibliographystyle{amsalpha} 
\bibliography{irrational_zeta}
\end{document}